\newtheorem{thm}{Theorem}[section]
\newtheorem{cor}[thm]{Corollary}
\newtheorem{assumption}{Assumption}
\newtheorem{nota}[thm]{Notation}
\newtheorem{prop}[thm]{Proposition}
\theoremstyle{definition}
\newtheorem{df}[thm]{Definition}
\newtheorem{rem}[thm]{Remark}
\newtheorem{notation}[thm]{Notation}
\numberwithin{equation}{section}
\begin{document}

\title[An application of a functional inequality]{An application of a functional inequality to quasi-invariance in infinite dimensions}

\author[Gordina]{Maria Gordina{$^{\dag}$}}
\thanks{\footnotemark {$\dag$} Research was supported in part by NSF Grant DMS-1007496.}
\address{$^{\dag}$ Department of Mathematics\\
University of Connecticut\\
Storrs, CT 06269,  U.S.A.}
\email{maria.gordina@uconn.edu}

\keywords{Quasi-invariance; group action; functional inequalities.}

\subjclass{Primary 58G32 58J35; Secondary 22E65  22E30 22E45 58J65 60B15 60H05}


\begin{abstract} One way to interpret smoothness of a measure in infinite dimensions  is quasi-invariance of the measure under a class of transformations. Usually such settings lack a reference measure such as the Lebesgue or Haar measure, and therefore we can not use smoothness of a density with respect to such a measure. We describe how a functional inequality can be used to prove quasi-invariance results in several settings. In particular, this gives a different proof of the classical Cameron-Martin (Girsanov) theorem for an abstract Wiener space. In addition, we revisit several more geometric examples, even though the main abstract result concerns  quasi-invariance of a measure under a group action on a measure space.

\end{abstract}

\maketitle

\tableofcontents

\renewcommand{\contentsname}{Table of Contents}

\section{Introduction}\label{s.1}

Our goal in this paper is to describe how a functional inequality can be used to prove quasi-invariance of certain measures in infinite dimensions. Even though the original argument was used in a geometric setting, we take a slightly different approach in this paper. Namely, we formulate a method that can be used to prove quasi-invariance of a measure under a group action.

Such methods are useful in infinite dimensions when usually there is no a natural reference measure such as the Lebesgue measure. At the same time quasi-invariance of measures is a useful tool in proving regularity results when it is reformulated as an integration by parts formula. We do not discuss significance of such results, and moreover we do not refer to the extensive literature on the subject, as it is beyond the scope of our paper.

We start by describing an abstract setting of how finite-dimensional approximations can be used to prove such a quasi-invariance. In \cite{DriverGordina2009} this method was applied to projective and inductive limits of finite-dimensional Lie groups acting on themselves by left or right multiplication. In that setting a functional inequality (integrated Harnack inequality) on the finite-dimensional approximations leads to a quasi-invariance theorem on the infinite-dimensional group space. Similar methods were used in the elliptic setting  on infinite-dimensional Heisenberg-like groups in \cite{DriverGordina2008}, and on semi-infinite Lie groups in \cite{Melcher2009}. Note that the assumptions we make below in Section \ref{s.fin-dimen} have been verified in these settings, including the sub-elliptic case for infinite-dimensional Heisenberg group in \cite{BaudoinGordinaMelcher2013}. Even though the integrated Harnack inequality we use in these situations have a distinctly geometric flavor, we show in this paper that it does not have to be.

The paper is organized as follows. The general setting is described in Section \ref{s.2} and \ref{s.fin-dimen}, where Theorem \ref{t.qi} is the main result. One of the ingredients for this result is quasi-invariance for finite-dimensional approximations which is described in Section \ref{s.fin-dimen}. We review the connection between an integrated Harnack inequality and Wang's Harnack inequality in Section \ref{s.funineq}. Finally, Section \ref{s.examples} gives several examples of how one can use Theorem \ref{t.qi}. We describe in detail the case of an abstract Wiener space, where the group in question is identified with the Cameron-Martin subspace acting by translation on the Wiener space. In addition we discuss elliptic (Riemannian) and sub-elliptic (sub-Riemannian) infinite-dimensional groups which are examples of a subgroup acting on the group by multiplication.

\subsection*{Acknowledgement} The author is grateful to Sasha Teplyaev and Tom Laetsch for useful discussions and helpful comments.

\section{Notation}\label{s.2}

Suppose $G$ is a topological group with the identity $e$, $X$ is a topological space, $\left( X, \mathcal{B}, \mu \right)$ is a measure space, where $\mathcal{B}$ is the Borel $\sigma$-algebra, and $\mu$ is a probability measure. We assume that $G$ is endowed with the structure of a Hilbert Lie group (e.g. \cite{delaHarpeLNM}), and further that its Lie algebra $\mathfrak{g}:= Lie \left( G \right) = T_{e}G$ is equipped with a Hilbertian inner product, $\langle \cdot, \cdot \rangle$. The corresponding distance on $G$ is denoted by $d\left( \cdot, \cdot \right)$. In addition, we assume that $G$ is separable, and therefore we can use what is known about Borel actions of Polish groups \cite{Becker1998, BeckerKechris1993}. Once we have an inner product on the Lie algebra $\mathfrak{g}$, we can define the length of a path in $G$ as follows. Suppose $k \in C^{1}\left( [0, 1], G \right)$, $k\left( 0 \right)=e$, then

\begin{equation}\label{e.2.1}
l_{G}\left( k \left( \cdot \right) \right):= \int_{0}^{\cdot} \vert L_{ k\left(t \right)^{-1} \ast} \dot{k}\left( t \right)\vert dt,
\end{equation}
where $L_{g}$ is the left translation by $g \in G$.

We assume that $G$ acts measurably on $X$, that is, there is a (Borel) measurable map $\Phi: G \times X \longrightarrow X$ such that

\begin{align*}
& \Phi\left( e, x \right)=x, \text{ for all } x \in X,
\\
& \Phi\left( g_{1}, \Phi\left( g_{2}, x \right) \right)=\Phi\left( g_{1}g_{2}, x \right), \text{ for all } x \in X, g_{1}, g_{2} \in G.
\end{align*}
We often will use $\Phi_{g}:=\Phi\left( g, \cdot \right)$ for $g \in G$.
\begin{df} Suppose $\Phi$ is  a measurable group action of $G$ on $X$.
\begin{enumerate}

  \item In this case we denote by $\left(\Phi_{g}\right)_{\ast}\mu$ the \emph{pushforward measure} defined by

\[
\left(\Phi_{g}\right)_{\ast}\mu\left( A \right):=\mu\left( \Phi\left( g^{-1}, A \right) \right), \text{ for all } A \in \mathcal{B}, g \in G;
\]
  \item the measure $\mu$ is \emph{invariant} under the action $\Phi$ if

  \[
  \left(\Phi_{g}\right)_{\ast}\mu=\mu \text{ for all }  g \in G;
  \]
  \item the measure $\mu$ is \emph{quasi-invariant} with respect to the action $\Phi$ if $\left(\Phi_{g}\right)_{\ast}\mu$ and $\mu$ are mutually absolutely continuous for all $g \in G$.
\end{enumerate}

\end{df}

\begin{nota} For a topological group $G$ acting measurably on the measure space $\left( X, \mathcal{B}, \mu \right)$ in such a way that $\mu$ is quasi-invariant under the action by $G$, the Radon-Nikodym derivative of  $\left(\Phi_{g}\right)_{\ast}\mu$ with respect to $\mu$ is denoted by

\[
J_{g}\left( x \right):=\frac{\left(\Phi_{g}\right)_{\ast}\mu \left( dx \right)}{\mu \left( dx \right)} \text{ for all } g \in G, x \in X.
\]
\end{nota}
For  a thorough discussion of the Radon-Nikodym derivative in this setting we refer to \cite[Appendix D]{BuehlerPhD}

\section{Finite-dimensional approximations and quasi-invariance}\label{s.fin-dimen}

We start by describing approximations to both the group $G$ and the measure space $\left( X, \mathcal{B}, \mu \right)$. At the end we also need to impose certain conditions to have consistency of the group action defined on these approximations. As $X$ is a topological space, we denote by $C_{b}\left( X \right)$ the space of continuous bounded real-valued functions.

\begin{assumption}[Lie group assumptions]\label{a.1} Suppose  $\left\{ G_{n}\right\}_{n \in \mathbb{N}}$ is a collection of finite-dimensional unimodular Lie subgroups of $G$ such that $G_{n} \subset G_{m}$ for all $n < m$. We assume that there exists a smooth section $\left\{ s_{n}: G \longrightarrow G_{n} \right\}_{n \in \mathbb{N}}$, that is, $s_{n} \circ i_{n}=id_{G_{n}}$, where $i_{n}: G_{n} \longrightarrow G$ is the smooth injection. We suppose that $\bigcup_{n \in \mathbb{N}} G_{n}$ is a dense subgroup of $G$. In addition, we assume that the length of a path in $G$ can be approximated by the lengths in $G_{n}$, namely, if $k \in C^{1}\left( [0, 1], G \right)$, $k\left( 0 \right)=e$, then

\begin{equation}\label{e.3.4}
l_{G}\left( k \left( \cdot \right) \right)=
\lim_{n \to \infty}
l_{G_{n}}\left( s_{n} \left( k \left( \cdot \right) \right)\right).
\end{equation}
\end{assumption}
Note that $s_{n}$ does not have to be a group homomorphism.
\begin{assumption}[Measure space assumptions]\label{a.2} We assume that $X$ is a separable topological space with a sequence of topological spaces $X_{n} \subset X$ which come with corresponding continuous maps $\pi_{n}: X \longrightarrow X_{n}$ satisfying the following properties. For any $f \in C_{b}\left( X \right)$

\begin{equation}\label{e.3.1}
\int_{X} f d\mu=\lim_{n \to \infty} \int_{X_{n}} f \circ j_{n} d\mu_{n},
\end{equation}
where $j_{n}: X_{n} \longrightarrow X$ is the continuous injection map, and $\mu_{n}$ is the pushforward measure $\left( \pi_{n}  \right)_{\ast} \mu$.
\end{assumption}
Our last assumption concerns the group action for these approximations.

\begin{assumption}[Group action assumptions]\label{a.3} The approximations to group $G$ and the measure space $\left( X, \mu \right)$ are consistent with the group action in the following way

\begin{align*}
& \Phi\left( G_{n} \times X_{n}\right) \subset X_{n} \text{ for each } n \in \mathbb{N},
\\
&  \Phi_{g}: X \to X \text{ is a continuous map }  \text{ for each } g \in G.
\end{align*}
\end{assumption}
We denote by  $\Phi^{n}$ the restriction of $\Phi$ to $G_{n} \times X_{n}$. Observe that $\Phi^{n}=\Phi\circ \left( i_{n}, j_{n} \right)$ which together with Assumption \ref{a.3}, it is clear that $\Phi^{n}$ is a measurable group action of $G_{n}$ on $\left( X_{n}, \mathcal{B}_{n}, \mu_{n} \right)$.

Suppose now that $\mu_{n}$ is quasi-invariant under the group action $\Phi^{n}$, and let $J_{g}^{n}$ be the Radon-Nikodym derivative $\left( \Phi^{n}_{g}\right)_{\ast}\mu_{n}$ with respect to $\mu_{n}$. We assume that there is a positive constant  $C=C\left( p \right)$ such that for any $p \in [1, \infty)$ and $g \in G_{n}$
\begin{equation}
\label{e.4.4}
\|J_{g}^{n}\|_{L^{p}( X_{n}, \mu_{n})} \leqslant
	\exp \left( C\left( p \right) d_{G_{n}}^{2}\left( e, g \right)\right).
\end{equation}
Note that the constant $C\left( p \right)$ does not depend on $n$.

\begin{rem} The fact that this estimate is Gaussian (with the square of the distance) does not seem to be essential. But as we do not have examples with a different exponent, we leave \eqref{e.4.4} as is. Moreover, we could consider a more general function on the right hand side than an exponential of the distance squared.
\end{rem}

\begin{thm}[Quasi-invariance of $\mu$]\label{t.qi} Suppose we have a group $G$ and a measure space $\left( X, \mathcal{B}, \mu \right)$ satisfying Assumptions \ref{a.1}, \ref{a.2} and \ref{a.3}, and the uniform estimate \eqref{e.4.4} on the Radon-Nikodym derivatives holds.

Then for all $g \in G$  the measure $\mu$ is quasi-invariant under the action $\Phi_{g}$. Moreover, for all $p \in (1,\infty)$,
\begin{equation}
\label{e.RNest}
\left\|\frac{d \left( \Phi_{g} \right)_{\ast} \mu}{d\mu}\right\|_{L^{p}( X,\mu)}
	\leqslant \exp \left( C\left( p\right) d_{G}^{2}\left( e, g \right)\right).
\end{equation}

\end{thm}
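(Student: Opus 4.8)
The plan is to transfer the uniform finite-dimensional bound \eqref{e.4.4} to the limit using the approximation properties in Assumptions \ref{a.1} and \ref{a.2}, and then to identify the resulting limiting functional as the Radon-Nikodym derivative of $(\Phi_g)_*\mu$. First I would fix $g \in G$ and $p \in (1,\infty)$. Since $\bigcup_n G_n$ is dense in $G$, I would pick $g_n \in G_n$ with $g_n \to g$; a natural candidate is $g_n = s_n(k(1))$ for a path $k \in C^1([0,1],G)$ from $e$ to $g$, so that \eqref{e.3.4} gives $d_{G_n}(e,g_n) = l_{G_n}(s_n(k(\cdot))) \to l_G(k(\cdot))$, and by choosing $k$ to be (close to) a minimal geodesic one gets $\limsup_n d_{G_n}(e,g_n) \leqslant d_G(e,g)$. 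The hypothesis \eqref{e.4.4} then gives a bound on $\|J^n_{g_n}\|_{L^p(X_n,\mu_n)}$ that is uniform in $n$, with constant asymptotically $\exp(C(p) d_G^2(e,g))$.

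Next I would pass to the limit on the level of functionals on $C_b(X)$. For $f \in C_b(X)$, consider $\Lambda_n(f) := \int_{X_n} (f\circ j_n)\, J^n_{g_n}\, d\mu_n = \int_{X_n} (f\circ j_n)\circ \Phi^n_{g_n}\, d\mu_n$, using the change-of-variables interpretation of $J^n_{g_n}$. By Assumption \ref{a.3}, $j_n \circ \Phi^n_{g_n} = \Phi_{g_n}\circ j_n$ on $X_n$ (consistency of the action with the injections), so $\Lambda_n(f) = \int_{X_n} (f\circ \Phi_{g_n})\circ j_n \, d\mu_n$. Since $\Phi_{g_n}$ is continuous, $f\circ\Phi_{g_n} \in C_b(X)$, and one would like to send $n\to\infty$ using \eqref{e.3.1}. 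Here there are two limits entangled — $g_n \to g$ inside $\Phi$ and $n\to\infty$ in the approximation — so I would argue that $f\circ \Phi_{g_n} \to f\circ\Phi_g$ boundedly (pointwise, from continuity of $\Phi$ in the group variable) and combine this with \eqref{e.3.1} to conclude $\Lambda_n(f) \to \int_X f\circ\Phi_g\, d\mu = \int_X f\, d(\Phi_g)_*\mu$. Simultaneously, the uniform $L^p$ bound shows the family $\{ (j_n)_*(J^n_{g_n}\mu_n)\}$ has uniformly integrable densities, so along a subsequence there is a weak limit of the form $\varrho\, d\mu$ with $\|\varrho\|_{L^p(X,\mu)} \leqslant \exp(C(p)d_G^2(e,g))$ by weak lower semicontinuity of the $L^p$ norm (using $p>1$, hence reflexivity — this is exactly why $p=1$ is excluded in \eqref{e.RNest}). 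Matching the two computations of $\lim \Lambda_n(f)$ on the dense class $C_b(X)$ identifies $d(\Phi_g)_*\mu = \varrho\, d\mu$, giving both quasi-invariance in one direction and the bound \eqref{e.RNest}.

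To get full mutual absolute continuity I would run the same argument with $g$ replaced by $g^{-1}$: the estimate \eqref{e.RNest} for $(\Phi_{g^{-1}})_*\mu$ shows this measure is absolutely continuous with respect to $\mu$, equivalently (since $(\Phi_{g^{-1}})_* = (\Phi_g)_*^{-1}$ as operations on measures) that $\mu \ll (\Phi_g)_*\mu$. Combined with $(\Phi_g)_*\mu \ll \mu$ from the previous paragraph, this yields that $\mu$ and $(\Phi_g)_*\mu$ are mutually absolutely continuous, i.e. quasi-invariance.

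I expect the main obstacle to be the interchange of the two limiting procedures in the middle paragraph: making rigorous that $\int_{X_n}(f\circ\Phi_{g_n})\circ j_n\, d\mu_n \to \int_X f\circ\Phi_g\, d\mu$ when both $n\to\infty$ and $g_n\to g$. Assumption \ref{a.2} is stated for a fixed test function, so one needs either a uniformity in the test function (e.g. an equicontinuity/tightness argument for $\{\mu_n\}$, using separability of $X$ to reduce to a countable determining family) or a diagonal argument choosing $g_n \to g$ slowly enough relative to the convergence rate in \eqref{e.3.1}. A clean way around it is to first prove the result for $g \in \bigcup_n G_n$ (where one can take $g_n = g$ eventually and only the approximation limit \eqref{e.3.1} is in play), obtaining \eqref{e.RNest} on the dense subgroup, and then extend to all $g\in G$ by a further limiting argument in $g$ — using \eqref{e.RNest} itself, continuity of $\Phi_g$ in $g$, and weak compactness in $L^p$ — which localizes the difficulty to one genuinely new limit rather than a simultaneous double limit. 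The secondary technical point is the measure-theoretic bookkeeping around the Radon-Nikodym derivatives on $X_n$ versus $X$ (Assumption \ref{a.3} and the factorization $\Phi^n = \Phi\circ(i_n,j_n)$ are what make this go through), but that is routine given the setup.
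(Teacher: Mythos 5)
Your overall strategy is the paper's: transfer the uniform bound \eqref{e.4.4} through the approximations, use the consistency $j_n\circ\Phi^n_g=\Phi_g\circ j_n$ from Assumption \ref{a.3}, replace $d_{G_n}$ by $d_G$ via near-minimal paths and \eqref{e.3.4}, prove the estimate first for $g\in\bigcup_n G_n$ and then extend by density (the paper does this by dominated convergence and continuity of $d_G(e,\cdot)$, avoiding your ``double limit'' entirely, exactly as your own fallback suggests), identify the Radon--Nikodym derivative by $L^{p'}$--$L^p$ duality on $C_b(X)$ plus a monotone class argument, and get mutual absolute continuity by applying the one-sided result to $g^{-1}$ as well.

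The one step that fails as literally stated is the compactness argument in your middle paragraph: the measures $(j_n)_*\bigl(J^n_{g_n}\,\mu_n\bigr)$ are absolutely continuous with respect to $(j_n)_*\mu_n$, not with respect to $\mu$; in the motivating examples they are typically singular with respect to $\mu$ (e.g.\ for the abstract Wiener space, $\mu_n$ is a Gaussian measure carried by the finite-dimensional subspace $j_n(X_n)=H_n$, which is $\mu$-null), so there are no ``uniformly integrable densities'' in $L^p(X,\mu)$ from which to extract a weak limit, and weak lower semicontinuity has nothing to act on. The correct repair is to apply the duality at the level of the limiting functional rather than the approximating measures: from H\"older and \eqref{e.4.4} you get $\vert\Lambda_n(f)\vert\leqslant\Vert f\circ j_n\Vert_{L^{p'}(X_n,\mu_n)}\exp\left(C(p)\,d_{G_n}^2(e,g)\right)$, and since $\vert f\vert^{p'}\in C_b(X)$, Assumption \ref{a.2} gives $\Vert f\circ j_n\Vert_{L^{p'}(\mu_n)}\to\Vert f\Vert_{L^{p'}(\mu)}$; hence the limit functional $f\mapsto\int_X f(\Phi_g(x))\,d\mu(x)$ is bounded on $C_b(X)$ in the $L^{p'}(X,\mu)$-norm, and $(L^{p'})^*\cong L^p$ (this, not reflexivity per se, is where $p>1$ enters, since $C_b$ is not dense in $L^\infty$ and $(L^\infty)^*\neq L^1$) produces $J_g\in L^p(X,\mu)$ with the bound \eqref{e.RNest}; a monotone class argument then upgrades the identity from $C_b(X)$ to all bounded measurable $f$. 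This is precisely the paper's route, and with that substitution your proposal goes through.
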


\begin{proof}

Using  \eqref{e.4.4} we see that for any bounded continuous function $f \in C_{b}\left( X \right)$, $n \in \mathbb{N}$, and $g \in G$
\begin{align*}
&\int_{X_{n}} \vert (f\circ j_{n})(\Phi^{n}_{s_{n}\left( g \right)}\left( x \right))\vert d\mu_{n}(x)
	= \int_{X_{n}} J_{s_{n}\left( g \right)}^{n}(x)\vert (f\circ j_{n})(x)\vert d\mu_{n}(x) \\
	&\quad\leqslant \Vert f\circ j_{n}\Vert_{L^{p^{\prime}}(X_{n}, \mu_{n})}  \exp \left( C\left( p \right) d_{G_{n}}^{2}\left( e, s_{n}\left( g \right) \right)\right),
\end{align*}
where $p^{\prime}$ is the conjugate exponent to $p$. Note that by Assumption \ref{a.3}  and definitions of $j_{n}$ and $\Phi^{n}$ for all $\left( g, x \right)  \in G_{n} \times X_{n}$

\[
j_{n}\left(\Phi^{n}_{g}\left( x \right)\right)=\Phi^{n}_{g}\left( x \right)=\Phi_{g}\left( x\right)=\Phi_{g}\left( j_{n} \left( x \right)\right)
\]
and therefore

\[
f\circ j_{n}(\Phi^{n}_{g}\left( x \right))=f\left(\Phi_{g}^{n}\left( x \right) \right)= f\circ \Phi_{g}\left(  j_{n}\left( x \right) \right), \left( g, x \right)  \in G_{n} \times X_{n}.
\]
Thus
\[
\int_{X_{n}} \vert \left( f\circ j_{n} \right)\left( \Phi^{n}_{g}\left( x \right) \right)\vert d\mu_{n}(x)=
\int_{X_{n}} \vert \left( f\circ \Phi_{g} \right)\left(  j_{n}\left( x \right) \right)\vert d\mu_{n}(x).
\]
Allowing $n \rightarrow \infty$ in the last identity and using \eqref{e.3.1} and the fact that $f\circ \Phi_{g} \in C_{b}\left( X \right)$ yields
\begin{equation}\label{e.5.3} \int_{X} \vert f \left(\Phi_{g}\left(  x  \right)\right) \vert d\mu(x)
	\leqslant \Vert f\Vert_{L^{p^{\prime}}(X, \mu)} \exp \left( C\left( p \right) d_{G_{n}}^{2}\left( e, g \right)\right), \text{ for all } g \in G_{n}.
\end{equation}
Thus, we have proved that \eqref{e.5.3} holds for $f \in C_{b}\left( X \right)$ and $g \in G_{n}$. Now we would like to prove \eqref{e.5.3} for the distance $d_{G}$ instead of $d_{G_{n}}$ with $g$ still in $G_{n}$. Take any path $k \in C^{1}\left( [0, 1], G \right)$ such that $k\left( 0 \right)=e$ and $k\left( 1 \right)=g$, and observe that then $s_{n} \circ k \in C^{1}\left( [0, 1], G_{n} \right)$  and therefore \eqref{e.5.3} holds with $d_{G_{n}}^{2}\left( e, g \right)$ replaced by $l_{G_{n}}\left( s_{n} \circ k\right)\left( 1 \right)$. Now we can use \eqref{e.3.1} in Assumption \ref{a.1} and optimizing over all such paths $k$ to see that

\begin{equation}\label{e.3.6} \int_{X} \vert f \left(\Phi_{g}\left(  x  \right)\right) \vert d\mu(x)
	\leqslant \Vert f\Vert_{L^{p^{\prime}}(X, \mu)} \exp \left( C\left( p \right) d_{G}^{2}\left( e, g \right)\right), \text{ for all } g \in \bigcup_{n \in \mathbb{N}}G_{n}.
\end{equation}
By Assumption \ref{a.1}  this union is dense in $G$, therefore dominated convergence along with the continuity of $d_{G}\left( e, g \right)$ in $g$ implies that \eqref{e.3.6} holds for all $g\in G$. Since the bounded continuous functions are dense in $L^{p^{\prime}}( X, \mu)$ (see for example \cite[Theorem A.1, p. 309]{Janson1997}), the inequality in \eqref{e.3.6} implies that the linear functional $\varphi_{g}: C_{b}(X) \rightarrow\mathbb{R}$ defined by
\[
\varphi_{g}(f) := \int_{X} \vert f \left(\Phi_{g}\left(  x  \right)\right) \vert d\mu(x)
\]
has a unique extension to an element, still denoted by $\varphi_{g}$, of $L^{p^{\prime}}( X,\mu)^{\ast}$ which satisfies the bound
\[ \vert \varphi_{g}(f)\vert \leqslant \|f\|_{L^{p^{\prime}}( X,\mu)}
	\exp \left( C\left( p\right) d_{G}^{2}\left( e, g \right)\right)
\]
for all $f\in L^{p^{\prime}}( X,\mu)$.  Since $L^{p^{\prime}}( X,\mu)^{\ast}\cong L^{p}( W,\mu)$, there exists a function $J_{g}\in L^{p}( X,\mu)$ such that
\begin{equation}
\label{e.d}
\varphi_{g}(f) = \int_{X} f(x)J_{g}(x)d\mu(x),
\end{equation}
for all $f\in L^{p^{\prime}}( X,\mu)$, and

\[
\|J_{g}\|_{L^{p}( X,\mu)} \leqslant \exp \left( C\left( p \right) d_{G}^{2}\left( e, g \right)\right).
\]

Now restricting (\ref{e.d}) to $f\in C_{b}\left( X \right)$, we may rewrite this equation as
\begin{equation}
\label{e.last}
\int_{X}  f \left(\Phi_{g}\left(  x  \right)\right) d\mu(x)
	= \int_{W} f(x) J_{g}(x) d\mu(x).
\end{equation}
Then a monotone class argument (again use \cite[Theorem A.1]{Janson1997}) shows that \eqref{e.last} is valid for all bounded measurable functions $f$ on $W$.  Thus, $d \left( \Phi_{g} \right)_{\ast} \mu / d\mu$ exists and is given by $J_{g}$, which is in
$L^p$ for all $p\in(1,\infty)$ and satisfies the bound \eqref{e.RNest}.

\end{proof}

\section{A functional inequality}\label{s.funineq}

In this section we would like to revisit an observation made in \cite{DriverGordina2009}. Namely, \cite[Lemma D.1]{DriverGordina2009} connects Wang's Harnack inequality with an estimate similar to \ref{e.4.4}. It is easy to transfer this argument from the setting of Riemannian manifolds to a more general situation.

We start with an integral operator on $L^{2}\left( X, \nu \right)$, where $\left( X, \nu \right)$ is a $\sigma$-finite measure space. Namely, let

\[
Tf\left( x \right):=\int_{X} p\left( x, y \right) f\left( y \right) d\nu\left( y \right), f \in L^{2}\left( X, \nu \right),
\]
where the integral kernel $p\left(  x, y\right)$ is assumed to satisfy the following properties.

\begin{align*}
& \text{ positive } & p\left(  x, y\right)>0 \text{ for all } x, y \in X,
\\
& \text{ conservative } & \int_{X} p\left( x, y \right) d\nu\left( y \right)=1  \text{ for all } x \in X,
\\
& \text{ symmetric } & p\left(  x, y\right)=p\left(  y, x\right)  \text{ for all } x, y \in X,
\\
&  \text{ continuous } & p\left(  \cdot , \cdot \right):  X \times X \longrightarrow \mathbb{R} \text{ is continuous}.
\end{align*}

Some of these assumptions might not be needed for the proof of Proposition \ref{p.4.1}, but we make them to simplify the exposition. Note that in our applications this integral kernel is the heat kernel for  a strongly  continuous, symmetric, Markovian semigroup in $L^{2}\left( X, \nu  \right)$, therefore the corresponding heat kernel is positive, symmetric with the total mass not exceeding $1$, in addition to having the semigroup property or being the approximate identity in $L^{2}\left( X, \nu  \right)$.  In our examples this heat semigroup is also conservative, therefore the heat kernel is conservative (stochastically complete), and thus $p\left( x, y \right) d\nu\left( y \right)$ is a probability measure.

The following proposition is a generalization of  \cite[Lemma D.1]{DriverGordina2009}, and it simply reflects the fact that $\left(  L^{p}\right)^{\ast}$ and $L^{p^{\prime}}$ are isometrically isomorphic Banach spaces for $1<p<\infty$ and $p^{\prime}=p/\left(  p-1\right)$, the conjugate exponent to $p$.

\begin{prop}\label{p.4.1}
Let $x,y \in X$, $p \in (1,\infty)$ and $C \in(0,\infty]$ which might depend on $x$ and $y$. Then

\begin{equation}\label{e.D.1}
\left[  \left(  Tf\right)  \left(  x\right)  \right]^{p} \leqslant
C^{p}\left(  Tf^{p}\right)  \left(  y\right)  \text{ for all } f\geqslant 0
\end{equation}
if and only if
\begin{equation}\label{e.D.2}
\left(  \int_{X}\left[  \frac{p\left(  x, z\right)  }{p \left(
y, z\right)  }\right]^{p^{\prime}}p\left(  y,z\right)  d\nu\left(
z\right)  \right)^{1/p^{\prime}}\leqslant C.
\end{equation}

\end{prop}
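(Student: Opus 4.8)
The plan is to recast \eqref{e.D.1} as the explicit, sharp form of the duality $(L^{p})^{\ast}\cong L^{p'}$ on the probability space obtained by freezing the second variable of the kernel, exactly in the spirit of \cite[Lemma D.1]{DriverGordina2009}. Since $p(\cdot,\cdot)$ is conservative, $d\mu_{y}(z):=p(y,z)\,d\nu(z)$ is a probability measure on $X$, and writing $h(z):=p(x,z)/p(y,z)$ (well defined, positive and continuous by the standing hypotheses on the kernel) one has, for every measurable $f\geqslant 0$,
\[
(Tf)(x)=\int_{X} h(z)\,f(z)\,d\mu_{y}(z),\qquad (Tf^{p})(y)=\int_{X} f(z)^{p}\,d\mu_{y}(z).
\]
Hence \eqref{e.D.1} is exactly the assertion that $\|hf\|_{L^{1}(\mu_{y})}\leqslant C\,\|f\|_{L^{p}(\mu_{y})}$ for all $f\geqslant 0$, while \eqref{e.D.2} is the assertion $\|h\|_{L^{p'}(\mu_{y})}\leqslant C$; so the proposition follows once this equivalence is established.

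For the implication \eqref{e.D.2}$\Rightarrow$\eqref{e.D.1} I would simply apply H\"older's inequality with conjugate exponents $p$ and $p'$ to $\int_{X} hf\,d\mu_{y}$, bound the first factor by $C$ using \eqref{e.D.2}, and raise the resulting inequality to the $p$-th power. The case $C=\infty$ is vacuous, or trivial once one notes that $(Tf^{p})(y)=0$ forces $f=0$ $\nu$-a.e.\ (as $p(y,\cdot)>0$) and hence $(Tf)(x)=0$.

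The converse \eqref{e.D.1}$\Rightarrow$\eqref{e.D.2} is the one place where a little care is needed, and is where I expect the only (mild) obstacle: the natural test function is the H\"older extremizer $f=h^{1/(p-1)}$, but this need not lie in $L^{p}(\mu_{y})$, so I would instead test \eqref{e.D.1} against the bounded truncations $f_{N}:=(h\wedge N)^{p'-1}$. Using the elementary identities $(p'-1)p=p'$ and $(p-1)p'=p$ one computes $(Tf_{N}^{p})(y)=I_{N}$ and, since $h\geqslant h\wedge N$ and $p(x,z)=h(z)p(y,z)$, also $(Tf_{N})(x)\geqslant I_{N}$, where $I_{N}:=\int_{X}(h\wedge N)^{p'}\,d\mu_{y}$ satisfies $0<I_{N}\leqslant N^{p'}<\infty$. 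Substituting into \eqref{e.D.1} and dividing by $I_{N}$ gives $I_{N}^{1/p'}\leqslant C$; letting $N\to\infty$ and invoking monotone convergence, $I_{N}\uparrow\int_{X}h^{p'}\,d\mu_{y}$, which is the left-hand side of \eqref{e.D.2} raised to the power $p'$, and yields \eqref{e.D.2} (with the convention that a divergent integral forces $C=\infty$, consistent with the hypothesis $C\in(0,\infty]$). The continuity assumption on the kernel is used only to ensure that $h$ and the truncations $f_{N}$ are measurable.
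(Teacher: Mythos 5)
Your proposal is correct and is essentially the paper's argument: both rewrite $(Tf)(x)=\int_X h f\,d\mu_y$ with $h=p(x,\cdot)/p(y,\cdot)$ and $d\mu_y=p(y,\cdot)\,d\nu$, and read \eqref{e.D.1} and \eqref{e.D.2} as the statement $\|h\|_{L^{p'}(\mu_y)}\leqslant C$ via $L^{p}$--$L^{p'}$ duality. The only difference is that the paper invokes the duality sup-formula $\|h\|_{L^{p'}(\mu_y)}=\sup_{f\geqslant 0}\int hf\,d\mu_y/\|f\|_{L^p(\mu_y)}$ in one stroke, whereas you verify that standard fact by hand (H\"older for one direction, truncations $(h\wedge N)^{p'-1}$ and monotone convergence for the other), which is a correct and slightly more self-contained rendering of the same step.
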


\begin{proof}
Since $p \left(  \cdot, \cdot \right)$ is positive, we can write
\[
\left(  Tf\right)  \left(  x \right)  = \int_{X}\frac{p\left(
x, z\right)  }{p\left(  y, z\right)  } f\left(  z\right)  p\left(
y, z\right)  d\nu\left(  z\right).
\]
We denote $d\mu_{y}\left(  \cdot \right):=p\left(  y, \cdot\right)  d\nu\left(  \cdot\right)$ and
$g_{x, y }\left(  \cdot \right):=\frac{p\left(  x, \cdot\right)  }{p\left( y, \cdot \right)}$, then
\begin{equation}\label{e.D.3}
\left(  Tf\right)  \left(  x\right) =\int_{X}f\left(  z\right)  g_{x, y }\left(
z\right)  d\mu_{y }\left(  z \right).
\end{equation}
Since $g_{x, y } \geqslant 0$ and $L^{p}\left(  \mu \right)^{\ast}$ is isomorphic to $L^{p^{\prime}}\left(  \mu \right)$, the pairing in \eqref{e.D.3} implies that
\[
\left\Vert g_{x, y } \right\Vert_{L^{p^{\prime}}\left(  \mu\right)  }=\sup_{f\geqslant
0}\frac{\int_{X} f\left(  z\right)  g_{x, y }\left(
z\right)  d\mu_{y }\left(  z \right)
}{\left\Vert f\right\Vert _{L^{p}\left(  \mu_{y}\right)  }} = \sup_{f \geqslant 0}
\frac{\left(  Tf\right)  \left(  x\right)  }{\left[  \left(  T f^{p}\right)  \left(  y\right)  \right]^{1/p}}.
\]
The last equation may be written more explicitly as
\[
\left(  \int_{X}\left[  \frac{p\left(  x, z\right)  }{p\left(
y, z\right)  }\right]^{p^{\prime}}p\left(  y, z\right)  d\nu\left(
z \right)  \right)  ^{1/p^{\prime}}=\sup_{f\geqslant 0}\frac{\left(  Tf\right)
\left(  x\right)  }{\left[  \left(  Tf^{p}\right)  \left(  y\right)
\right]^{1/p}},
\]
and from this equation the result follows.
\end{proof}
\begin{rem} In the case when $T$ is a Markov semigroup $P_{t}$ and $p\left( \cdot, \cdot \right)=p_{t}\left( \cdot, \cdot \right)$ is the corresponding integral kernel, Proposition \ref{p.4.1} shows that a Wang's Harnack inequality is equivalent to an integrated Harnack inequality. Subsection \ref{ss.Harnack} gives more details on this equivalence for Riemannian manifolds, see Corollary \ref{c.5.11}.
\end{rem}

\begin{rem} The connection between Proposition \ref{p.4.1} and \eqref{e.4.4} can be seen if we choose $x$ and $y$ in \eqref{e.D.2} as the endpoints of the group action as follows. Let $x, y \in X$ and $g \in G$ be such that $\Phi_{e}\left( x \right)=x$ and $\Phi_{g}\left( x \right)=y$, then to apply Proposition \ref{p.4.1} we can take the constant in \eqref{e.D.2} to be equal to

\[
\exp \left( C\left( p-1 \right) d_{G_{n}}^{2}\left( e, g \right)\right).
\]
Here the measure on $X$ is $d\mu_{x}\left( z\right)= p\left(  x, z\right)  d\nu\left( z\right)$.
\end{rem}

\section{Examples}\label{s.examples}

\subsection{Abstract Wiener space}
Standard references on basic facts on the Gaussian measures include \cite{BogachevGaussianMeasures, KuoBook1975}. Let $\left(  H, W, \mu \right)$ be an abstract Wiener space, that is, $H$ is a real separable Hilbert space densely continuously embedded into a real separable Banach space $W$, and $\mu$ is the Gaussian measure defined by the characteristic functional

\[
\int_{W}e^{i\varphi\left(  x\right)}d\mu\left(  x\right)  =\exp\left(
-\frac{|\varphi|_{H^{\ast}}^{2}}{2}\right)
\]
for any $\varphi\in W^{\ast}\subset H^{\ast}$. We will identify $W^{\ast}$ with a dense subspace of $H$ such that for any $h \in W^{\ast}$ the linear functional $\langle\cdot, h\rangle$ extends continuously from $H$ to $W$. We will usually write $\langle\varphi, w\rangle:=\varphi\left(  w\right)$ for $\varphi\in W^{\ast}$, $w\in W$. More details can be found in \cite{BogachevGaussianMeasures}. It is known that $\mu$ is a Borel measure, that is, it is defined on the Borel $\sigma$-algebra $\mathcal{B}\left(  W\right)$ generated by the open subsets of $W$.

We would like to apply the material from Sections \ref{s.funineq} \ref{s.fin-dimen} with $\left( X, \mu \right)=\left( W, \mu \right)$ and the group $G=E_{W}$ being the group of (measurable) rotations and translations by the elements from the Cameron-Martin subspace $H$. We can view this group as an infinite-dimensional analogue of the Euclidean group.

\begin{nota}
We call an orthogonal transformation of $H$ which is a topological homeomorphism of $W^{\ast}$ a \textbf{rotation} of $W^{\ast}$. The space of all such rotations is denoted by $O\left(  W \right)$. For any $R \in O\left(  W^{\ast}\right)$ its adjoint, $R^{\ast}$, is defined by

\[
\langle\varphi, R^{\ast}w \rangle:=\langle R^{-1}\varphi, w \rangle, \ w \in W, \varphi \in W^{\ast}.
\]

\end{nota}

\begin{prop}\label{p.5.2}
For any $R \in O\left(  W \right)$ the map $R^{\ast}$ is a $\mathcal{B}\left(  W \right)$-measurable map from $W$ to $W$ and

\[
\mu\circ\left(  R^{\ast}\right)^{-1}=\mu.
\]

\end{prop}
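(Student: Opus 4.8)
The plan is to reduce the statement to the classical fact that the standard Gaussian measure on $\mathbb{R}^n$ is invariant under orthogonal transformations, and then pass to the limit using the cylinder-function structure of $\mu$. First I would address measurability: since $R$ is assumed to be a topological homeomorphism of $W^{\ast}$ and an orthogonal transformation of $H$, its adjoint $R^{\ast}$ is defined on $W$ by the pairing $\langle \varphi, R^{\ast} w\rangle = \langle R^{-1}\varphi, w\rangle$ for $\varphi \in W^{\ast}$, $w \in W$. For each fixed $\varphi$ the map $w \mapsto \langle \varphi, R^{\ast} w\rangle = \langle R^{-1}\varphi, w\rangle$ is a continuous linear functional on $W$ (because $R^{-1}\varphi \in W^{\ast}$), hence Borel measurable; since the Borel $\sigma$-algebra on the separable Banach space $W$ is generated by the functionals in $W^{\ast}$, this shows $R^{\ast}: W \to W$ is $\mathcal{B}(W)$-measurable.

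Next I would compute the pushforward via characteristic functionals, which determine Gaussian (indeed any Borel probability) measures on $W$. For $\varphi \in W^{\ast}$,
\[
\int_W e^{i\langle \varphi, w\rangle}\, d\bigl(\mu\circ (R^{\ast})^{-1}\bigr)(w) = \int_W e^{i\langle \varphi, R^{\ast} w\rangle}\, d\mu(w) = \int_W e^{i\langle R^{-1}\varphi, w\rangle}\, d\mu(w) = \exp\left(-\tfrac{1}{2}\,|R^{-1}\varphi|_{H^{\ast}}^2\right).
\]
Because $R$ is an orthogonal transformation of $H$, its inverse is also orthogonal, so $|R^{-1}\varphi|_{H^{\ast}} = |\varphi|_{H^{\ast}}$; thus the right-hand side equals $\exp(-\tfrac12 |\varphi|_{H^{\ast}}^2)$, which is precisely the characteristic functional of $\mu$. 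Since a Borel probability measure on a separable Banach space is uniquely determined by its characteristic functional (uniqueness theorem for Fourier transforms of measures on $W$, e.g. via the Minlos–Sazonov circle or directly by the fact that functionals in $W^{\ast}$ separate points and generate $\mathcal{B}(W)$), we conclude $\mu\circ(R^{\ast})^{-1} = \mu$.

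The main obstacle is the measurability statement, or more precisely making sure $R^{\ast}$ genuinely maps $W$ into $W$ and not just into some larger completion: this is exactly why the hypothesis that $R$ be a homeomorphism of $W^{\ast}$ (not merely a bounded operator on $H$) is imposed. One must check that the adjoint defined by the stated pairing is well-defined as a $W$-valued map — this follows because $R^{\ast}$ being the Banach-space adjoint of the homeomorphism $R: W^{\ast} \to W^{\ast}$ is a homeomorphism of the bidual, but $W$ sits inside its bidual, and the orthogonality on $H$ together with continuity on $W^{\ast}$ forces $R^{\ast}(W) \subseteq W$. Once that is in hand, the remainder is the routine characteristic-functional computation above, and the invariance of $\mu$ is immediate. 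An alternative, more hands-on route avoiding any delicate bidual discussion would be to realize $R$ as a limit of finite-dimensional orthogonal transformations acting on cylinder sets, use rotation invariance of the finite-dimensional standard Gaussian on each cylinder algebra, and invoke a monotone class / $\pi$-$\lambda$ argument; I would mention this as a fallback but prefer the characteristic functional proof for brevity.
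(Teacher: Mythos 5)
Your proposal is correct and follows essentially the same route as the paper: the paper likewise verifies the identity $\mu\circ(R^{\ast})^{-1}=\mu$ by computing the characteristic functional, $\int_{W}e^{i\langle\varphi,R^{\ast}w\rangle}d\mu(w)=\exp\left(-\tfrac{1}{2}|R^{-1}\varphi|_{H^{\ast}}^{2}\right)=\exp\left(-\tfrac{1}{2}|\varphi|_{H^{\ast}}^{2}\right)$ using that $R$ is an isometry, and disposes of measurability with a one-line remark. Your more careful justification of measurability (via the functionals $w\mapsto\langle R^{-1}\varphi,w\rangle$ generating $\mathcal{B}(W)$) and your flagging of the issue that $R^{\ast}$ must actually map $W$ into $W$ are refinements of, not departures from, the paper's argument.
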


\begin{proof}
The measurability of $R^{\ast}$ follows from the fact that $R$ is continuous on $H$. For any $\varphi\in W^{\ast}$

\begin{align*}
&  \int_{W}e^{i\varphi\left(  x\right)  }d\mu \left(  \left(  R^{ \ast }\right)^{-1}x\right)  = \int_{W}e^{i\langle\varphi, x \rangle} d\mu \left( \left(  R^{\ast}\right)^{-1}x\right)  = \int_{W}e^{i\langle\varphi, R^{ \ast }x\rangle} d\mu \left(  x\right)  =
\\
&  \exp\left(  -\frac{|R^{-1}\varphi|_{H^{\ast}}^{2}}{2}\right)  =\exp\left( -\frac{|\varphi|_{H^{\ast}}^{2}}{2}\right) =\int_{W}e^{i\varphi\left( x\right)  } d\mu\left(  x\right)
\end{align*}
since $R$ is an isometry.
\end{proof}

\begin{cor}
\label{c.6.3} Any $R\in O\left(  W \right)$ extends to a unitary map on $L^{2}\left(  W, \mu \right)$.
\end{cor}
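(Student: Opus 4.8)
The plan is to show that the operator induced by $R^\ast$ on functions, namely $U_R f := f \circ R^\ast$, is a well-defined linear isometry of $L^2(W,\mu)$ onto itself. First I would note that by Proposition \ref{p.5.2}, $\mu \circ (R^\ast)^{-1} = \mu$, so for any $f \in L^2(W,\mu)$ the change-of-variables formula gives
\[
\int_W |f(R^\ast x)|^2 \, d\mu(x) = \int_W |f(y)|^2 \, d\mu(y),
\]
which shows simultaneously that $U_R f$ is again in $L^2(W,\mu)$, that $U_R$ is well-defined on equivalence classes (if $f = 0$ $\mu$-a.e., then $f \circ R^\ast = 0$ $\mu$-a.e. by the same identity), and that $U_R$ preserves the $L^2$-norm. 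Linearity of $U_R$ is immediate from the pointwise definition.

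Next I would verify surjectivity, which is where one must be a little careful: a priori $R \in O(W)$ is defined as an orthogonal transformation of $H$ that is a homeomorphism of $W^\ast$, and one needs that $R^{-1}$ also lies in $O(W)$ so that $U_{R^{-1}}$ makes sense and serves as the inverse of $U_R$. Since $R$ is an orthogonal map of $H$, so is $R^{-1}$, and since $R$ is a homeomorphism of $W^\ast$, so is $R^{-1}$; hence $R^{-1} \in O(W)$ and $(R^{-1})^\ast = (R^\ast)^{-1}$ is again a $\mathcal{B}(W)$-measurable, $\mu$-preserving map by Proposition \ref{p.5.2}. Then $U_R U_{R^{-1}} = U_{R^{-1}} U_R = \mathrm{id}$ on $L^2(W,\mu)$ as a pointwise identity of compositions, so $U_R$ is a bijective linear isometry, i.e. a unitary operator on $L^2(W,\mu)$.

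The main obstacle, to the extent there is one, is purely a bookkeeping issue: making sure that composition with $R^\ast$ genuinely descends to $L^2$-equivalence classes and that the almost-everywhere identities $R^\ast \circ (R^{-1})^\ast = \mathrm{id}_W$ hold $\mu$-a.e. (they hold pointwise on the set where both maps are defined as honest measurable representatives, and $\mu$-measure-preservation handles the null sets). One should also remark that unitarity of $U_R$ on $L^2$ could alternatively be deduced by checking it on a total set — e.g. exponentials $e^{i\varphi}$, $\varphi \in W^\ast$ — using the computation already carried out in the proof of Proposition \ref{p.5.2}, but the direct change-of-variables argument above is cleaner and avoids any density discussion.
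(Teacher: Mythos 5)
Your argument is correct and is essentially the one the paper intends: the corollary is stated without proof as an immediate consequence of Proposition \ref{p.5.2}, namely that $f\mapsto f\circ R^{\ast}$ is an $L^{2}(W,\mu)$-isometry because $R^{\ast}$ preserves $\mu$, with surjectivity coming from $R^{-1}\in O(W)$ and $(R^{-1})^{\ast}=(R^{\ast})^{-1}$. Your extra care about descending to equivalence classes and the inverse map is a reasonable filling-in of routine details rather than a different route.
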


\begin{df} The \emph{Euclidean group} $E_{W}$ is a group generated by measurable rotations $R \in O\left(  W \right)$ and translation $T_{h}:W \rightarrow W$, $T_{h}\left( w\right):= w+h$.
\end{df}

To describe finite-dimensional approximations as in Section \ref{s.fin-dimen}  we need to give more details on the identification of $W^{\ast}$ with a dense subspace of $H$. Let $i:H\rightarrow W$ be the inclusion map, and $i^{\ast}:W^{\ast}\rightarrow H^{\ast}$ be its transpose, i.e. $i^{\ast}\ell:=\ell\circ i$ for all $\ell\in W^{\ast}$. Also let
\[
H_{\ast}:=\left\{  h\in H:\left\langle \cdot, h\right\rangle _{H} \in \operatorname{Ran}(i^{\ast})\subset H^{\ast}\right\}
\]
or in other words, $h\in H$ is in $H_{\ast}$ iff $\left\langle \cdot, h\right\rangle _{H}\in H^{\ast}$ extends to a continuous linear functional on $W$. We will continue to denote the continuous extension of $\left\langle \cdot, h\right\rangle _{H}$ to $W$ by $\left\langle \cdot, h\right\rangle_{H}$. Because $H$ is a dense subspace of $W$, $i^{\ast}$ is injective and because $i$ is injective, $i^{\ast}$ has a dense range. Since $h\mapsto \left\langle \cdot, h\right\rangle _{H}$ as a map from $H$ to $H^{\ast}$ is a conjugate linear isometric isomorphism, it follows from the above comments that for any $h \in H$ we have $h\mapsto \left\langle \cdot, h\right\rangle_{H} \in W^{\ast}$ is a conjugate linear isomorphism too, and that $H_{\ast}$ is a dense subspace of $H$.

Now suppose that $P: H\rightarrow H$ is a finite rank orthogonal projection such that $PH\subset H_{\ast}$. Let $\left\{  e_{j}\right\}_{j=1}^{n}$ be an orthonormal basis for $PH$ and $\ell_{j}=\left\langle \cdot,e_{j}\right\rangle_{H}\in W^{\ast}$.  Then we may extend $P$ to a (unique) continuous operator from $W$ $\rightarrow H$ (still denoted by $P$) by letting

\begin{equation}\label{e.3.42}
P_{n}w:=\sum_{j=1}^{n}\left\langle w, e_{j}\right\rangle _{H}e_{j}=\sum_{j=1}^{n}\ell_{j}\left(  w\right)  e_{j} \text{ for all } w\in W.
\end{equation}
As we pointed put in \cite[Equation 3.43]{DriverGordina2008} there exists $C<\infty$ such that
\begin{equation}\label{e.3.43}
\left\Vert Pw\right\Vert _{H}\leqslant C\left\Vert w\right\Vert _{W}\text{ for
all }w\in W.
\end{equation}

\begin{nota}\label{n.3.24} Let $\operatorname*{Proj}\left(  W\right)  $ denote the
collection of finite rank projections on $W$ such that $PW\subset H_{\ast}$
and $P|_{H}:H\rightarrow H$ is an orthogonal projection, i.e. $P$ has the form
given in Equation \eqref{e.3.42}.
\end{nota}

Also let $\left\{  e_{j}\right\}  _{j=1}^{\infty}\subset H_{\ast}$ be an
orthonormal basis for $H.$ For $n \in \mathbb{N}$, define $P_{n}\in
\operatorname*{Proj}\left(  W\right)$ as in Notation \ref{n.3.24}, i.e.
\begin{equation}\label{e.4.3}
P_{n}\left(  w\right)  =\sum_{j=1}^{n}\left\langle w,e_{j}\right\rangle_{H}e_{j}=\sum_{j=1}^{n}\ell_{j}\left(  w\right)  e_{j}\text{ for all }w\in W.
\end{equation}
Then we see that $P_{n}\left|_{H}\right. \uparrow Id_{H}$.

\begin{prop}\label{p.5.5} The Gaussian measure $\mu$ is quasi-invariant under the translations from $H$ and invariant under orthogonal transformations of $H$.
\end{prop}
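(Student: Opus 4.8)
The plan is to handle the two assertions separately. Invariance under orthogonal transformations of $H$ is already established: for $R\in O(W)$ the transformation $R^{\ast}$ is the group action $\Phi_{R}$, and Proposition \ref{p.5.2} gives $\mu\circ(R^{\ast})^{-1}=\mu$, so nothing further is needed. For quasi-invariance under translations I would apply Theorem \ref{t.qi} with $X=W$ and $G=\{T_{h}:h\in H\}$, the abelian Hilbert Lie group isometric to $H$ with Lie algebra $H$ and the given inner product, acting on $W$ by $\Phi(T_{h},w)=w+h$. This action is continuous, hence Borel, and $G$ is separable, so the standing hypotheses of Section \ref{s.2} hold.

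Next I would build the finite-dimensional approximations from the projections $P_{n}\in\operatorname{Proj}(W)$ of \eqref{e.4.3}. Put $G_{n}:=\{T_{v}:v\in P_{n}H\}$, a finite-dimensional unimodular (abelian) Lie subgroup of $G$, with smooth section $s_{n}(T_{h}):=T_{P_{n}h}$; then $G_{n}\subset G_{m}$ for $n<m$ and $\bigcup_{n}G_{n}$ is dense in $G$ since $P_{n}|_{H}\uparrow\mathrm{Id}_{H}$ and $e_{j}\in H_{\ast}$. On the measure side take $X_{n}:=P_{n}W$, $\pi_{n}:=P_{n}\colon W\to X_{n}$, and $j_{n}$ the inclusion; identifying $X_{n}$ with $\mathbb{R}^{n}$ via $w\mapsto(\ell_{1}(w),\dots,\ell_{n}(w))$, the pushforward $\mu_{n}=(P_{n})_{\ast}\mu$ is the standard Gaussian on $\mathbb{R}^{n}$ (its covariance is $\langle e_{i},e_{j}\rangle_{H}=\delta_{ij}$) and the restriction of $|\cdot|_{H}$ to $X_{n}$ is the Euclidean norm. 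To verify Assumption \ref{a.2}, i.e. $\int_{W}f\,d\mu=\lim_{n}\int_{W}f(P_{n}w)\,d\mu(w)$ for $f\in C_{b}(W)$, I would use the standard fact that $P_{n}w\to w$ in $W$ for $\mu$-a.e.\ $w$ together with dominated convergence. Assumption \ref{a.3} is immediate, since $T_{v}+P_{n}W\subset P_{n}W$ for $v\in P_{n}H$ and $w\mapsto w+h$ is continuous on $W$. For Assumption \ref{a.1}: as $G_{n}$ is abelian with the Euclidean metric, $d_{G_{n}}(e,T_{v})=|v|_{H}$; and for a $C^{1}$ path $k(t)=T_{h(t)}$ with $h(0)=0$, the path $s_{n}\circ k=T_{P_{n}h(\cdot)}$ is $C^{1}$ with $l_{G_{n}}(s_{n}\circ k)(\cdot)=\int_{0}^{\cdot}|P_{n}\dot h(t)|_{H}\,dt$, and since $|P_{n}\dot h(t)|_{H}\uparrow|\dot h(t)|_{H}$ pointwise, monotone convergence yields \eqref{e.3.4}.

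The crux is the uniform bound \eqref{e.4.4}. On $X_{n}\cong\mathbb{R}^{n}$ with $\mu_{n}$ standard Gaussian, the finite-dimensional Cameron--Martin formula shows $\mu_{n}$ is quasi-invariant under translation by $v\in P_{n}H$ with
\[
J^{n}_{v}(x)=\frac{d\,(\Phi^{n}_{T_{v}})_{\ast}\mu_{n}}{d\mu_{n}}(x)=\exp\!\left(\langle x,v\rangle_{H}-\tfrac12|v|_{H}^{2}\right),
\]
and a one-line Gaussian integral gives, for $p\in[1,\infty)$,
\[
\|J^{n}_{v}\|_{L^{p}(X_{n},\mu_{n})}^{p}=\int_{\mathbb{R}^{n}}\exp\!\left(p\langle x,v\rangle_{H}-\tfrac{p}{2}|v|_{H}^{2}\right)d\mu_{n}(x)=\exp\!\left(\tfrac{p(p-1)}{2}|v|_{H}^{2}\right),
\]
so $\|J^{n}_{v}\|_{L^{p}(X_{n},\mu_{n})}=\exp\!\left(\tfrac{p-1}{2}\,d_{G_{n}}^{2}(e,T_{v})\right)$. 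Thus \eqref{e.4.4} holds with $C(p)=(p-1)/2$, which does not depend on $n$.

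Theorem \ref{t.qi} then applies and gives quasi-invariance of $\mu$ under every $T_{h}$, $h\in H$, with
\[
\left\|\frac{d\,(T_{h})_{\ast}\mu}{d\mu}\right\|_{L^{p}(W,\mu)}\leqslant\exp\!\left(\tfrac{p-1}{2}\,d_{G}^{2}(e,T_{h})\right)=\exp\!\left(\tfrac{p-1}{2}|h|_{H}^{2}\right),
\]
recovering the classical Cameron--Martin bound; combined with Proposition \ref{p.5.2}, this proves the statement. The main obstacle is bookkeeping rather than depth: one must make sure that the metric $d_{G_{n}}$ in \eqref{e.4.4} is exactly the Euclidean one inherited from $\langle\cdot,\cdot\rangle_{H}$, so that the constant $C(p)$ genuinely does not drift with $n$, and one must invoke the a.e.\ convergence $P_{n}w\to w$ in $W$ --- a standard but non-trivial feature of abstract Wiener spaces --- to obtain Assumption \ref{a.2}; everything else reduces to the elementary Gaussian computation above.
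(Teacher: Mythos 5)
Your proposal is correct and follows essentially the same route as the paper: invariance under rotations is delegated to Proposition \ref{p.5.2}, and quasi-invariance under translations is obtained by identifying the translation group with $H$, using the projections $P_{n}$ of \eqref{e.4.3} as the finite-dimensional approximations, computing the finite-dimensional Gaussian Radon--Nikodym derivative to get \eqref{e.4.4} with $C(p)=(p-1)/2$, and then invoking Theorem \ref{t.qi}. The only difference is that you spell out the verification of Assumptions \ref{a.1}--\ref{a.3} (e.g.\ via $P_{n}w\to w$ a.e.\ and monotone convergence of path lengths), which the paper simply asserts.
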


\begin{proof} The second part of the statement is the content of Proposition \ref{p.5.2}. We now prove quasi-invariance of $\mu$ under translation by elements in $H$. Let $\{ P_{n} \}_{n \in \mathbb{N}}$ be a collection of operators defined by \eqref{e.4.3}for an orthonormal basis $\{ e_{j}\}_{j=1}^{\infty}$ of $H$ such that $\{ e_{j} \}_{j=1}^{\infty}\subseteq H_{\ast}$. Then $H_{n}:=P_{n}\left( H \right)\cong \mathbb{R}^{n}$, and the pushforward measure $\left( P_{n}\right)_{\ast}\mu$ is simply the standard Gaussian measure $p_{n}\left( x \right)dx$ on $H_{n}$. So if we identify the group of translation $G$ with $H$ and $s_{n}:=P_{n}\left|_{H}\right.$, then the group action  is given by $\Phi_{h}\left( w \right):=w+h, w \in W, h \in H$. Note that Assumptions \ref{a.1}, \ref{a.2} and \ref{a.3} are satisfied, where $j_{n}=P_{n}: W \longrightarrow H_{n}$ etc. In particular, if we denote $h_{n}:=P_{n}\left( h \right) \in \mathbb{R}^{n}, h \in H$, then for any measurable function $f:W \longrightarrow \mathbb{R}$ we see that

\[
f \circ P_{n}\left( w+h \right)=f \circ P_{n} \circ \Phi_{h_{n}}\left( w \right)=f \circ \Phi_{h_{n}} \circ P_{n}\left( w \right).
\]
Therefore
\begin{align*}
& \int_{W} f \circ P_{n}\left( w+h \right)
d\mu\left(w\right)=\int_{H_{n}} f\left( x+P_{n}h \right)
p_{n}\left(x\right)dx=
\\
&
\\
&\int_{H_{n}} f\left( x \right)
p_{n}\left(x-h_{n}\right)dx=\int_{H_{n}} f\left( x \right)
\frac{p_{n}\left(x-h_{n}\right)}{p_{n}\left(x\right)}p_{n}\left(x\right)dx
\\
&
\\
&=\int_{H_{n}} f\left( x \right)
J_{h_{n}}\left(x\right) p_{n}\left(x\right)dx.
\end{align*}
Using an explicit form of the Radon-Nikodym derivative $J_{h_{n}}\left(x\right)$, we see that for any $f \in L^{p^{\prime}}\left( W, \mu\right)$

\begin{align*}
\int_{W} \vert f \circ P_{n}\left( w+h \right)\vert
d\mu\left(w\right) & \leqslant\Vert f
\Vert_{L^{p^{\prime}}\left(p_{n}\left(x\right)dx\right)}
\left\Vert
\frac{p_{n}\left(x-h_{n}\right)}{p_{n}\left(x\right)}\right\Vert_{L^{p}\left(p_{n}\left(x\right)dx\right)}
\\
&
\\
& \leqslant \Vert f \circ P_{n}
\Vert_{L^{p^{\prime}}\left(p_{n}\left(x\right)dx\right)}\exp\left(
\frac{\left(p-1\right)\Vert h_{n}\Vert_{H}^{2}}{2}\right).
\end{align*}
Thus \eqref{e.4.4} is satisfied, and therefore Theorem \ref{t.qi} is applicable, which proves the quasi-invariance with the Radon-Nikodym derivative satisfying

\begin{equation}\label{e.5.4}
\Vert J_{h} \Vert_{ L^{p}\left( W, \mu \right)} \leqslant \exp{\left( \frac{\left( p-1 \right)\Vert h \Vert_{H}^{2}}{2} \right)}.
\end{equation}
\end{proof}

\begin{rem} The statement of Proposition \ref{p.5.5} of course follows from the Cameron-Martin theorem which  states that $\mu$ is quasi-invariant under translations by elements in $H$ with the Radon-Nikodym derivative given by

\[
\frac{d\left(  T_{h}\right)_{\ast}\mu}{d\mu}\left(  w\right)
=\frac{d\left(  \mu\circ T_{h}^{-1}\right)  }{d\mu}\left(  w\right)
=\frac{d\left(  \mu\circ T_{-h}\right)  }{d\mu}\left(  w\right)
=e^{-\langle h,w\rangle-\frac{|h|^{2}}{{2}}},\ w\in W, h\in H.
\]
Thus \eqref{e.5.4} is sharp.
\end{rem}

\begin{rem} Following \cite{DriverHall1999a} we see that quasi-invariance of the Gaussian measure $\mu$ induces the Gaussian regular representation of the Euclidean group $E_{W}$ on $L^{2}\left( W, \mu \right)$ by

\begin{align*}
&  \left(  U_{R,h}f\right)  \left(  w\right): =\left(  \frac{d\left(
\mu\circ\left(  T_{h}R^{\ast}\right)  \right)  }{d\mu}\left(  w\right)
\right)^{1/2}f\left(  \left(  T_{h}R^{\ast}\right)^{-1}\left(  w\right)
\right)  =
\\
&  \left(  \frac{d\left(  \mu\circ T_{h}\right)  }{d\mu}\left(
w\right)  \right)^{1/2}f\left(  \left(  R^{\ast}\right)^{-1}\left(
w-h\right)  \right)  =
\\
&  e^{\langle h,w\rangle-\frac{|h|^{2}}{{2}}}f\left(  \left(  R^{\ast}\right)^{-1}\left(  w-h\right)  \right), \ w\in W\nonumber
\end{align*}
which is well-defined by Corollary \ref{c.6.3}. It is clear that this is a
unitary representation.
\end{rem}

\subsection{Wang's Harnack inequality}\label{ss.Harnack} This follows \cite[Appendix D]{DriverGordina2009}. The following theorem appears in \cite{WangFY1997a, WangFY2004a} with $k=-K$,  $V\equiv0$. We will use the following notation

\begin{equation}\label{e.5.8}
c\left( t \right):=
\left\{
\begin{array}{cc}
  \frac{t}{e^{t}-1} & t\not=0,
  \\
  1 & t=0.
\end{array}
\right.
\end{equation}

\begin{thm}[Wang's Harnack inequality]\label{t.D.2} Suppose that $M$ is a complete connected Riemannian manifold such that $\operatorname{Ric}\geqslant kI$ for some $k\in\mathbb{R}$. Then for all $p>1$,  $f\geqslant 0$,  $t>0$,  and $x, y \in M$ we
have
\begin{equation}\label{e.D.4}
\left(  P_{t}f\right)^{p}\left(  y\right)  \leqslant \left(  P_{t}f^{p}\right)
\left(  z \right)  \exp\left(\frac{ p^{\prime}k}{e^{kt}-1}d^{2}\left(
y, z\right)  \right).
\end{equation}

\end{thm}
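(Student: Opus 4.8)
The plan is to deduce Wang's Harnack inequality from the well-known dimension-free Harnack inequality in the form due to Wang, which under $\operatorname{Ric} \geqslant kI$ provides a gradient/semigroup estimate that can be integrated along a geodesic. Concretely, I would use the pointwise log-Harnack or the coupling-by-reflection argument: fix $x, y \in M$ and a minimizing geodesic $\gamma$ from $y$ to $z$ of length $d(y,z)$, and consider the function $s \mapsto \log (P_{(1-s)t} (P_t f)^p)(\gamma(s))$ type interpolation — or more directly the classical route via the Bismut-type derivative formula $|\nabla P_t g| \leqslant e^{-kt/2} P_t|\nabla g|$ combined with Jensen's inequality.

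The cleanest path, and the one I expect the authors take, is the interpolation argument of F.-Y. Wang. First I would set, for $s \in [0,1]$, $\beta(s)$ to be an increasing reparametrization (to be chosen), $x_s = \gamma(s)$ the constant-speed geodesic from $y$ to $z$, and define
\begin{equation*}
\Lambda(s) := \log \left( P_{(1-\beta(s)) t}\bigl( (P_{\beta(s) t} f)^{\,\psi(s)} \bigr)(x_s) \right)^{1/\psi(s)},
\end{equation*}
where $\psi$ interpolates between $p$ at $s=0$ and $1$ at $s=1$. Differentiating $\Lambda$ in $s$, the semigroup terms contribute the generator $\tfrac12\Delta$, the exponent variation contributes an entropy term, and the spatial motion along $\gamma$ contributes $|\nabla \log(\cdots)|\cdot |\dot\gamma|$; the curvature lower bound $\operatorname{Ric}\geqslant kI$ controls the Bakry–Émery $\Gamma_2$ and lets one bound $\Lambda'(s)$ from below by $-\tfrac{1}{\psi(s)^2}\,\psi'(s)\cdot(\text{something})$ after completing the square. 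Integrating from $0$ to $1$ and optimizing the choice of $\psi$ (this is where the factor $c(kt) = kt/(e^{kt}-1)$, i.e. the quantity $\tfrac{k}{e^{kt}-1}$, emerges — it is exactly the constant coming from solving the resulting ODE/optimization) yields
\begin{equation*}
\log (P_t f)(y) \leqslant \frac{1}{p}\log (P_t f^p)(z) + \frac{p'k}{e^{kt}-1} d^2(y,z),
\end{equation*}
which after exponentiating and raising to the $p$-th power is exactly \eqref{e.D.4}.

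Since this is essentially a citation to \cite{WangFY1997a, WangFY2004a}, the "proof" in the paper is likely to be a brief indication rather than a full derivation, so I would: (i) state that it is a special case of the dimension-free Harnack inequality with power $p$, recorded in those references with $V \equiv 0$ and $k = -K$ in their sign convention; (ii) note that the function $c(t)$ in \eqref{e.5.8} packages the curvature-dependent time factor, so that $\tfrac{k}{e^{kt}-1} = \tfrac{c(kt)}{t}\cdot\tfrac{1}{?}$ — more precisely $\tfrac{k}{e^{kt}-1}$ is the natural rate; and (iii) remark that completeness of $M$ is used to guarantee the heat semigroup is conservative and that minimizing geodesics between $y$ and $z$ exist, which is what the interpolation argument needs.

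The main obstacle, if one wanted a self-contained proof rather than a citation, is the differentiation-and-curvature-estimate step: making rigorous the differentiation under the semigroup of the interpolated functional (requires regularity/approximation arguments on a possibly noncompact manifold), and correctly carrying out the completion of the square so that the $\operatorname{Ric}\geqslant kI$ bound produces precisely the constant $p'k/(e^{kt}-1)$ and not merely a constant of the right order. Given the phrasing of the excerpt ("The following theorem appears in..."), I expect the paper simply cites \cite{WangFY1997a, WangFY2004a} and at most translates notation, so my proof proposal is: reduce to the cited statement, reconcile sign conventions ($k = -K$, $V \equiv 0$), and observe that \eqref{e.D.4} is the verbatim specialization.
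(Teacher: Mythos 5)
Your proposal matches the paper: Theorem \ref{t.D.2} is not proved there at all, but simply quoted from \cite{WangFY1997a, WangFY2004a} with the notational translation $k=-K$, $V\equiv 0$, which is exactly your ``reduce to the cited statement and reconcile conventions'' route. (The only quibble is in your optional interpolation sketch, where the displayed log-inequality as written would give an extra factor of $p$ in the exponent after exponentiating --- it should read $\frac{1}{p}\cdot\frac{p^{\prime}k}{e^{kt}-1}d^{2}(y,z)$ to recover \eqref{e.D.4} verbatim --- but this heuristic part plays no role in the citation argument the paper actually uses.)
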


\begin{cor}\label{c.5.11}
Let $\left(  M,g\right)$ be a complete Riemannian manifold such that $\operatorname{Ric}\geqslant kI$ for some $k \in \mathbb{R}$. Then for every $y,z \in M$ and $p \in\lbrack1,\infty)$
\begin{equation} \label{e.D.5}
\left(  \int_{M}\left[  \frac{p_{t}\left(  y, x\right)  }{p_{t}\left( z, x\right)  }\right]^{p}p_{t}\left(  z, x\right)  dV\left(  x\right) \right)^{1/p}\leqslant \exp\left(  \frac{c\left(  kt\right)  \left( p-1\right) }{2t}d^{2}\left(  y, z\right)  \right)
\end{equation}
where $c\left(  \cdot\right)$ is defined by \eqref{e.5.8}, $p_{t}\left(  x, y\right)$ is the heat kernel on $M$ and $d\left( y, z\right)$ is the Riemannian distance from $x$ to $y$ for $x, y\in M$.
\end{cor}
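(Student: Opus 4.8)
The plan is to read Corollary \ref{c.5.11} off directly from Proposition \ref{p.4.1}, applied to the heat semigroup of $(M,g)$. Concretely, I would take $\nu=V$ (the Riemannian volume), $T=P_{t}$ the heat semigroup on $L^{2}(M,V)$, and $p(\cdot,\cdot)=p_{t}(\cdot,\cdot)$ the heat kernel. First I would verify that $p_{t}$ satisfies the four standing hypotheses on the kernel imposed in Section \ref{s.funineq}: joint continuity and strict positivity of $p_{t}$ are standard parabolic regularity and strong maximum principle facts for a complete Riemannian manifold; symmetry $p_{t}(x,y)=p_{t}(y,x)$ is self-adjointness of $P_{t}$ on $L^{2}(M,V)$; and conservativeness $\int_{M}p_{t}(x,y)\,dV(y)=1$ follows from stochastic completeness of $M$, which is guaranteed by the lower Ricci bound $\operatorname{Ric}\geqslant kI$. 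Thus $(M,V)$ together with $P_{t}$ falls under the hypotheses of Proposition \ref{p.4.1}.

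The single bookkeeping point is that Proposition \ref{p.4.1} naturally produces the \emph{conjugate} exponent $p^{\prime}$ inside the integrand of \eqref{e.D.2}, whereas \eqref{e.D.5} carries the exponent $p$ itself. Hence for $p\in(1,\infty)$ I would apply Proposition \ref{p.4.1} with its exponent taken to be $p^{\prime}=p/(p-1)$ (whose conjugate is $p$, since $(p^{\prime})^{\prime}=p$), at the pair of points $x\mapsto y$ and $y\mapsto z$. The pointwise hypothesis \eqref{e.D.1} needed for this application is exactly Wang's Harnack inequality of Theorem \ref{t.D.2} with its exponent replaced by $p^{\prime}$, that is
\[
\bigl(P_{t}f\bigr)^{p^{\prime}}(y)\leqslant\bigl(P_{t}f^{p^{\prime}}\bigr)(z)\,\exp\!\bigl(\alpha\, d^{2}(y,z)\bigr),\qquad f\geqslant 0,
\]
where $\alpha$ is the Gaussian constant furnished by Theorem \ref{t.D.2} at exponent $p^{\prime}$. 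This identifies the constant $C$ of \eqref{e.D.1}--\eqref{e.D.2} via $C^{p^{\prime}}=\exp(\alpha\, d^{2}(y,z))$; simplifying the resulting exponent $\alpha/p^{\prime}$ using $p/p^{\prime}=p-1$ and the definition \eqref{e.5.8} of $c$ should give $C=\exp\!\bigl(\tfrac{c(kt)(p-1)}{2t}\,d^{2}(y,z)\bigr)$.

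Then the implication \eqref{e.D.1}$\Rightarrow$\eqref{e.D.2} of Proposition \ref{p.4.1} — which is genuinely an identity, as its proof shows, so the implication is immediate — yields at once
\[
\left(\int_{M}\left[\frac{p_{t}(y,x)}{p_{t}(z,x)}\right]^{p}p_{t}(z,x)\,dV(x)\right)^{1/p}\leqslant\exp\!\left(\frac{c(kt)(p-1)}{2t}\,d^{2}(y,z)\right),
\]
which is \eqref{e.D.5} for all $p\in(1,\infty)$; here the passage $\tfrac{k}{2(e^{kt}-1)}=\tfrac{1}{2t}\cdot\tfrac{kt}{e^{kt}-1}=\tfrac{c(kt)}{2t}$ is valid for $k\neq 0$ and extends to $k=0$ precisely because $c(0)=1=\lim_{s\to 0}s/(e^{s}-1)$. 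Finally, the endpoint $p=1$ in the stated range $[1,\infty)$ is trivial: by symmetry and conservativeness the left side equals $\int_{M}p_{t}(y,x)\,dV(x)=1$, while the right side is $\exp(0)=1$, so equality holds (alternatively one lets $p\downarrow 1$).

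I do not expect a genuine obstacle; once Wang's inequality is in hand, the statement is an exercise in $L^{p}$--$L^{p^{\prime}}$ duality. The two places requiring care are: (i) the conjugate-exponent accounting just described — one must feed Theorem \ref{t.D.2} at exponent $p^{\prime}$, not $p$, and then simplify $p/p^{\prime}=p-1$; and (ii) matching the normalization of the heat semigroup $P_{t}$ used in Theorem \ref{t.D.2} with the integral operator $T$ of Proposition \ref{p.4.1}, so that the Gaussian constant is transported with the correct time-scaling (this is what produces the $\tfrac{1}{2t}$ rather than $\tfrac{1}{t}$ in front of $c(kt)$). The kernel hypotheses — positivity, symmetry, continuity, and especially conservativeness deduced from $\operatorname{Ric}\geqslant kI$ — are standard and I would cite rather than reprove them.
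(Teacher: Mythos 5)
Your proposal is correct and is essentially the paper's own argument: both deduce \eqref{e.D.5} by feeding Wang's Harnack inequality (Theorem \ref{t.D.2}) into the duality identity of Proposition \ref{p.4.1}. The only difference is cosmetic---you apply Proposition \ref{p.4.1} at exponent $p^{\prime}$ so that $p$ appears directly in the integrand, whereas the paper applies it at $p$ and interchanges $p \leftrightarrow p^{\prime}$ at the end; your additional checks (kernel hypotheses, the $p=1$ endpoint, and the semigroup normalization accounting for the $\tfrac{1}{2t}$) are consistent with what the paper takes for granted.
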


\begin{proof}
From Lemma \ref{p.4.1} and Theorem \ref{t.D.2} with
\[
C=\exp\left(  \frac{p^{\prime}}{p}\frac{k}{e^{kt}-1}d^{2}\left(  y, z\right)
\right)  =\exp\left(  \frac{1}{p-1}\frac{k}{e^{kt}-1}d^{2}\left(  y, z\right)
\right),
\]
it follows that it follows that
\[
\left(  \int_{M}\left[  \frac{p_{t}\left(  x, z\right)  }{p_{t}\left(
y, z\right)  }\right]^{p^{\prime}}p_{t}\left(  y, z\right)  dV\left(
z\right)  \right)^{1/p^{\prime}}\leqslant \exp\left(  \frac{1}{p-1}\frac{k}
{e^{kt}-1}d^{2}\left(  y, z\right)  \right).
\]
Using $p-1=\left(  p^{\prime}-1\right)^{-1}$ and then interchanging the
roles of $p$ and $p^{\prime}$ gives \eqref{e.D.5}.
\end{proof}

The reason we call \ref{e.D.2} an integrated Harnack inequality on a $d$-dimensional manifold $M$ is as follows. Recall the classical Li--Yau Harnack inequality (\cite{LiYau1986} and  \cite[Theorem 5.3.5]{DaviesHeat_Kernels_and_Spectral_Theory}) which  states that if $\alpha>1$,  $s>0$,  and $\operatorname{Ric}\geqslant -K$ for some $K\geqslant 0$, then
\begin{equation}\label{e.D.6}
\frac{p_{t}\left(  y, x\right)  }{p_{t+s}\left(  z, x\right)  }\leqslant \left(
\frac{t+s}{t}\right)^{d\alpha/2}\exp\left(  \frac{\alpha d^{2}\left(
y, z\right)  }{2s}+\frac{d \alpha Ks}{8\left(  \alpha-1\right)  }\right),
\end{equation}
for all $x, y, z \in M$ and $t>0$.  However, when $s=0$,  \eqref{e.D.6} gives no information on $p_{t}\left(  y, x\right)  /p_{t}\left(  z, x\right)$ when $y\neq z$. This inequality is based on the Laplacian $\Delta/2$ rather than $\Delta$, $t$ and $s$ should be replaced by $t/2$ and $s/2$ when applying the results in \cite{LiYau1986, DaviesHeat_Kernels_and_Spectral_Theory}.

\subsection{Infinite-dimensional Heisenberg-like groups:} \textbf{Riemannian and sub-Riemannian cases.} These examples represent infinite-dimensional versions of the group action of a Lie group on itself by left or right multiplication. The difference is in geometry of the space on which the group acts on: Riemannian and sub-Riemannian. In both cases we proved \eqref{e.4.4}, where the constant $C$ depends on the geometry, and the distance used is Riemannian or Carnot-Carath\'{e}odory.

Let $(W, H,\mu)$ be an abstract Wiener space and let $\mathbf{C}$ be a finite-dimensional inner product space. Define $\mathfrak{g}:=W\times \mathbf{C}$ to be an infinite-dimensional Heisenberg-like Lie algebra, which is constructed as an infinite-dimensional step 2 nilpotent Lie algebra with continuous Lie bracket. Namely, let $\omega: W \times W \rightarrow\mathbf{C}$  be a continuous skew-symmetric bilinear form on $W$.  We will also assume that $\omega$
is surjective.

Let $\mathfrak{g}$ denote $W\times\mathbf{C}$ when thought of as a Lie algebra
with the Lie bracket given by
\begin{equation}
\label{e.3.5}
[(X_1,V_1), (X_2,V_2)] := (0, \omega(X_1,X_2)).
\end{equation}
Let $G$ denote $W\times\mathbf{C}$ when thought of as a group with
multiplication given by
\begin{equation*}
 g_1 g_2 := g_1 + g_2 + \frac{1}{2}[g_1,g_2],
\end{equation*}
where $g_1$ and $g_2$ are viewed as elements of $\mathfrak{g}$. For $g_i=(w_i, c_i)$, this may be written equivalently as
\begin{equation}
\label{e.3.2}
(w_1,c_1)\cdot(w_2,c_2) = \left( w_1 + w_2, c_1 + c_2 +
    \frac{1}{2}\omega(w_1,w_2)\right).
\end{equation}
Then $G$ is a Lie group with Lie algebra $\mathfrak{g}$, and $G$ contains the subgroup $G_{CM} = H\times\mathbf{C}$ which has Lie
algebra $\mathfrak{g}_{CM}$. In terms of Section \ref{s.2} the Cameron-Martin (Hilbertian) subgroup $G_{CM}$ is the group that is acting on the Heisenberg group $G$ by left or right multiplication.

Using Notation \ref{n.3.24} we can define finite-dimensional approximations to $G$ by using $P\in\mathrm{Proj}(W)$. We assume in addition that $PW$ is sufficiently large to satisfy H\"ormander's condition (that is, $\{\omega(A,B):A,B\in PW\}=\mathbf{C}$). For each $P\in\mathrm{Proj}(W)$, we define $G_P:= PW \times\mathbf{C}\subset H_*\times\mathbf{C}$ and a corresponding projection $\pi_P:G\rightarrow G_P$

\[ \pi_P(w,x):= (Pw,x). \]
We will also let $\mathfrak{g}_P=\mathrm{Lie}(G_P) = PW\times\mathbf{C}$. For each $P\in\mathrm{Proj}(W)$, $G_P$ is a finite-dimensional connected unimodular Lie group.

\begin{notation}
(Riemannian and horizontal distances on $G_{CM}$)
\label{n.length}

\begin{enumerate}
\item For $x=(A,a)\in G_{CM}$, let
\[ |x|_{\mathfrak{g}_{CM}}^2 : = \|A\|_H^2 + \|a\|_\mathbf{C}^2. \]
The {\em length} of a $C^1$-path $\sigma:[ 0, 1]\rightarrow
G_{CM}$ is defined as
\[ \ell(\sigma)
	:= \int_0^1 |L_{\sigma^{-1}(s)*}\dot{\sigma}(s)|_{\mathfrak{g}_{CM}} \,ds.
\]
By $C^{1}_{CM}$ we denote the set of paths $\sigma:[0,1]\rightarrow G_{CM}$.
\item \label{i.2}
A $C^1$-path $\sigma:[0, 1]\rightarrow G_{CM}$ is {\em horizontal} if
$L_{\sigma(t)^{-1}*}\dot{\sigma}(t)\in H\times\{0\}$
for a.e.~$t$.  Let $C^{1,h}_{CM}$ denote the set of horizontal paths
$\sigma:[0,1]\rightarrow G_{CM}$.

\item The {\em Riemannain distance} between $x, y\in G_{CM}$ is defined by
\[ d(x,y) := \inf\{\ell(\sigma): \sigma\in C^{1}_{CM} \text{ such
    that } \sigma(0)=x \text{ and } \sigma(1)=y \}. \]

\item The {\em horizontal distance} between $x,y\in G_{CM}$ is defined by
\[ d^{h}(x,y) := \inf\{\ell(\sigma): \sigma\in C^{1,h}_{CM} \text{ such
    that } \sigma(0)=x \text{ and } \sigma(1)=y \}. \]
\end{enumerate}
The Riemannian and horizontal distances are defined analogously on $G_P$ and will be denoted by
$d_P$ and $d^{h}_{P}$ correspondingly.  In particular, for a sequence $\{P_n\}_{n=1}^\infty\subset\mathrm{Proj}(W)$, we will let $G_n:=G_{P_n}$, $d_n:=d_{P_n}$, and  $d^{h}_n:=d^{h}_{P_n}$.
\end{notation}

Now we are ready to define the corresponding heat kernel measures on $G$. We start by considering two Brownian motions on  $\mathfrak{g}$

\begin{align*}
& b_{t}:= \left(  B\left(  t\right), B_{0}\left(  t\right)  \right), t\geqslant 0,
\\
& b^{h}_{t}:= \left(  B\left(  t\right), 0 \left(  t\right)  \right), t\geqslant 0,
\end{align*}
with variance determined by
\begin{multline*}
\mathbb{E}\left[  \left\langle \left(  B\left(  s\right), B_{0}\left(
s\right)  \right),\left(  A, a\right)  \right\rangle_{\mathfrak{g}_{CM}
}\left\langle \left(  B\left(  t\right), B_{0}\left(  t\right)  \right), \left(  C,c\right)  \right\rangle _{\mathfrak{g}_{CM}}\right]
\\
=\operatorname{Re}\left\langle \left(  A, a\right),\left(  C,  c\right)
\right\rangle _{\mathfrak{g}_{CM}}\min\left(  s,t\right)
\end{multline*}
for all $s, t\in \lbrack0, \infty)$, $A, C\in H_{\ast}$ and $a, c\in\mathbf{C}$.

A (Riemannian) \emph{Brownian motion} on $G$ is the continuous $G$--valued process defined by
\begin{equation}
g\left(  t\right)  =\left(  B\left(  t\right), B_{0}\left(  t\right)
+\frac{1}{2}\int_{0}^{t}\omega\left(  B\left(  \tau\right), dB\left(
\tau\right)  \right)  \right).
\end{equation}
Further, for $t>0,$ let $\mu_{t}=\operatorname{Law}\left(  g\left(  t\right) \right)$ be a probability measure on $G$. We refer to $\mu_{t}$ as the time $t$ \emph{ heat kernel measure on } $G$.

Similarly a \emph{horizontal Brownian motion} on $G$ is the continuous $G$--valued process defined by
\begin{equation}
g^{h}\left(  t\right)  =\left(  B\left(  t\right), \frac{1}{2}\int_{0}^{t}\omega\left(  B\left(  \tau\right), dB\left(
\tau\right)  \right)  \right).
\end{equation}
Then for $t>0,$ let $\mu^{h}_{t}=\operatorname{Law}\left(  g^{h}\left(  t\right) \right)$ be a probability measure on $G$. We refer to $\mu_{t}$ as the time $t$ \emph{ horizontal heat kernel measure on }$G$.

As the proof of \cite[Theorem 8.1]{DriverGordina2008} explains, in this case Assumptions \ref{a.1}, \ref{a.2} and \ref{a.3} are satisfied, and moreover, \eqref{e.4.4} is satisfied as follows. Namely, \cite[Corollary 7.3]{DriverGordina2008} says that the Ricci curvature is bounded from below by $k\left( \omega \right)$ uniformly for all $G_{n}$, so \eqref{e.4.4} holds as follows.

\begin{equation}
\left\Vert J_{k}^{n}\right\Vert_{L^{p}\left(  \mu^{n}\right)  } \leqslant\exp\left(  \frac{c\left(  k \left(  \omega\right)  t\right)  \left( p-1\right)  }{2t}d_{n}^{2}\left(  \mathbf{e}, k \right)  \right), k \in G_{n},
\end{equation}
where $c\left( \cdot \right)$ is defined by \eqref{e.5.8}.

In the sub-Riemannian case we have

\begin{equation}
\left\Vert J_{k}^{h, n}\right\Vert_{L^{p}\left(  \mu^{n}_{h}\right)  } \leqslant
	\exp \left(\left(1 + \frac{8\|\omega\|_{2, n}^2}{\rho_{2, n}} \right) \frac{(1+p)\left(d_{n}^{h}(e,k)\right)^2}{4t} \right),
\end{equation}
where the geometric constants are defined as in \cite[p. 25]{BaudoinGordinaMelcher2013}.
\bibliographystyle{plain}	

\begin{thebibliography}{10}

\bibitem{BaudoinGordinaMelcher2013}
Fabrice Baudoin, Maria Gordina, and Tai Melcher.
\newblock Quasi-invariance for heat kernel measures on sub-{R}iemannian
  infinite-dimensional {H}eisenberg groups.
\newblock {\em Trans. Amer. Math. Soc.}, 365(8):4313--4350, 2013.

\bibitem{Becker1998}
Howard Becker.
\newblock Polish group actions: dichotomies and generalized elementary
  embeddings.
\newblock {\em J. Amer. Math. Soc.}, 11(2):397--449, 1998.

\bibitem{BeckerKechris1993}
Howard Becker and Alexander~S. Kechris.
\newblock Borel actions of {P}olish groups.
\newblock {\em Bull. Amer. Math. Soc. (N.S.)}, 28(2):334--341, 1993.

\bibitem{BogachevGaussianMeasures}
Vladimir~I. Bogachev.
\newblock {\em Gaussian measures}, volume~62 of {\em Mathematical Surveys and
  Monographs}.
\newblock American Mathematical Society, Providence, RI, 1998.

\bibitem{BuehlerPhD}
Theo B{\"u}hler.
\newblock {\em On the algebraic foundation of bounded cohomology}.
\newblock PhD thesis, ETH, 2008.

\bibitem{DaviesHeat_Kernels_and_Spectral_Theory}
E.~B. Davies.
\newblock {\em Heat kernels and spectral theory}, volume~92 of {\em Cambridge
  Tracts in Mathematics}.
\newblock Cambridge University Press, Cambridge, 1989.

\bibitem{delaHarpeLNM}
Pierre de~la Harpe.
\newblock {\em Classical {B}anach-{L}ie algebras and {B}anach-{L}ie groups of
  operators in {H}ilbert space}.
\newblock Lecture Notes in Mathematics, Vol. 285. Springer-Verlag, Berlin-New
  York, 1972.

\bibitem{DriverGordina2008}
Bruce~K. Driver and Maria Gordina.
\newblock Heat kernel analysis on infinite-dimensional {H}eisenberg groups.
\newblock {\em J. Funct. Anal.}, 255(9):2395--2461, 2008.

\bibitem{DriverGordina2009}
Bruce~K. Driver and Maria Gordina.
\newblock Integrated {H}arnack inequalities on {L}ie groups.
\newblock {\em J. Differential Geom.}, 83(3):501--550, 2009.

\bibitem{DriverHall1999a}
Bruce~K. Driver and Brian~C. Hall.
\newblock The energy representation has no non-zero fixed vectors.
\newblock In {\em Stochastic processes, physics and geometry: new interplays,
  {II} ({L}eipzig, 1999)}, volume~29 of {\em CMS Conf. Proc.}, pages 143--155.
  Amer. Math. Soc., Providence, RI, 2000.

\bibitem{Janson1997}
Svante Janson.
\newblock {\em Gaussian {H}ilbert spaces}, volume 129 of {\em Cambridge Tracts
  in Mathematics}.
\newblock Cambridge University Press, Cambridge, 1997.

\bibitem{KuoBook1975}
Hui~Hsiung Kuo.
\newblock {\em Gaussian measures in {B}anach spaces}.
\newblock Springer-Verlag, Berlin, 1975.
\newblock Lecture Notes in Mathematics, Vol. 463.

\bibitem{LiYau1986}
Peter Li and Shing-Tung Yau.
\newblock On the parabolic kernel of the {S}chr\"odinger operator.
\newblock {\em Acta Math.}, 156(3-4):153--201, 1986.

\bibitem{Melcher2009}
Tai Melcher.
\newblock Heat kernel analysis on semi-infinite {L}ie groups.
\newblock {\em J. Funct. Anal.}, 257(11):3552--3592, 2009.

\bibitem{WangFY1997a}
Feng-Yu Wang.
\newblock Logarithmic {S}obolev inequalities on noncompact {R}iemannian
  manifolds.
\newblock {\em Probab. Theory Related Fields}, 109(3):417--424, 1997.

\bibitem{WangFY2004a}
Feng-Yu Wang.
\newblock Equivalence of dimension-free {H}arnack inequality and curvature
  condition.
\newblock {\em Integral Equations Operator Theory}, 48(4):547--552, 2004.

\end{thebibliography}
\def\cprime{$'$}

\end{document}